\newcommand{\noi}{\noindent}
\definecolor{myblue}{rgb}{0.3, 0.4, 1}
\title{Tikz}
\newcommand{\vertex}{\node[vertex]}
\newtheorem{theorem}{Theorem}[section]
\newtheorem{definition}[theorem]{Definition}
\newtheorem{example}[theorem]{Example}
	\title{Line Completion Number of Grid Graph $P_n \times P_m$}
	\author{Joseph Varghese Kureethara,$^{1}$\footnote{Corresponding Author} ~Merin Sebastian$^2$\\
	\small Christ University, Bangalore, India\\
	\small $^1$frjoseph@christuniversity.in, $^{2}$merin.sebastian@maths.christuniversity.in
	}
	\date{}
\begin{document}
    \tikzstyle{arrow}=[thick, <-->, >=stealth]
    \tikzstyle{vertex}=[circle, draw, inner sep=1pt, minimum size=6pt]    	
    \maketitle
    
    \abstract{The concept of super line graph was introduced in the year 1995 by Bagga, Beineke and Varma. Given a graph  with at least $r$ edges, the super line graph of index $r$, $L_r(G)$, has as its vertices the sets of $r$-edges of $G$, with two adjacent if there is an edge in one set adjacent to an edge in the other set. The line completion number $lc(G)$  of a graph $G$ is the least positive integer $r$  for which $L_r(G)$ is a complete graph. In this paper, we find the line completion number of grid graph $P_n \times P_m$ for various cases of $n$ and $m$.}
    
    \baselineskip=0.30in
    
   \section{Introduction}
   \label{intro}
   Line graphs have been studied for over seventy years by now. The line graph transformation is one of the most widely studied of all graph transformations.
   In its long history, the concept has been rediscovered several times, with different names such as derived graphs, interchange graphs, edge-to-vertex duals etc. Line graphs can also be considered as intersection graphs. Several variations and generalizations of line graphs have been proposed and studied. These include the concepts of total graphs, path graphs, and others \cite{Kuree}.

\begin{figure}[h!]
	\centering
	\begin{tikzpicture}
	\vertex (u1) at (0,0) []{4};
	\vertex (u2) at (2,0) []{3};
	\vertex (u3) at (2,2) []{2};
	\vertex (u4) at (0,2) []{1};
	\vertex (ua) at (3,1) []{a};
	\vertex (ub) at (4,2) []{b};
	\vertex (uc) at (5,1) []{c};
	\vertex (ud) at (4,0) []{d};
	\vertex (ue) at (4,1) []{e};
	\path
	(u1) edge node [below] {$d$} (u2)
	(u2) edge node [right] {$c$} (u3)
	(u3) edge node [above] {$b$} (u4)
	(u4) edge node [left] {$a$} (u1)
	(u1) edge node [above] {$e$} (u3)
	(ua) edge (ub)
	(ua) edge (ud)
	(ua) edge (ue)
	(ud) edge (ue)
	(ud) edge (uc)
	(uc) edge (ue)
	(ub) edge (ue)
	(ub) edge (uc)
	;
	\end{tikzpicture}
	\caption{Diamond and its line graph}\label{fig:linegraph}
\end{figure}
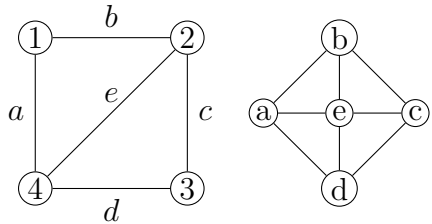
   \begin{definition}
	The Line Graph $L(G)$ of a graph $G$ is defined to have as its vertices the edges of $G$, with two being adjacent if the corresponding edges share a vertex in $G$.
\end{definition}
   In most generalizations of line graphs, the vertices of the new graph are taken to be another family of subgraphs of $G$, and adjacency is defined in terms of an appropriate intersection. For example, in the $r$-path graph $Pr (G)$ of a graph $G$, the vertices are the paths in $G$ of length $r$, with adjacency being defined as overlapping in a path of length $r-1$. We now see an interesting generalization of line graphs.
   
   \section{Super Line Graph }
   Given a graph $G$ with at least $r$ edges, \textbf{the super line graph (of index r) $L_r(G)$} has as its vertices the sets of $r$ edges of $G$, with two adjacent if there is an edge in one set adjacent to an edge in the other set.

   Figure \ref{fig:path} is the super line graph of (of index 2) $P_5$.
   \vskip 0.1cm
   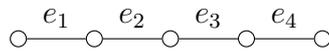
\begin{figure}[h!]
   	\centering
   	\begin{tikzpicture}
   	\vertex (u1) at (0,0) []{};
   	\vertex (u2) at (1,0) []{};
   	\vertex (u3) at (2,0) []{};
   	\vertex (u4) at (3,0) []{};
   	\vertex (u5) at (4,0) []{};
   	\path
   	(u1) edge node [above] {$e_1$} (u2)
   	(u2) edge node [above] {$e_2$} (u3)
   	(u3) edge node [above] {$e_3$} (u4)
   	(u4) edge node [above] {$e_4$} (u5)
   	;
   	\end{tikzpicture}
   	\caption{\label{fig:Path-$P_5$} Path $P_5$}
   \end{figure}

   \begin{figure}[h!]
   	\centering
   	\begin{tikzpicture}
   	\vertex (u1) at (0,0) []{\{$e_1, e_4$\}};
   	\vertex (u2) at (6,0) []{\{$e_1, e_3$\}};
   	\vertex (u3) at (0,3) []{\{$e_2, e_3$\}};
   	\vertex (u4) at (3,4) []{\{$e_3, e_4$\}};
   	\vertex (u5) at (3,-1) []{\{$e_1, e_2$\}};
   	\vertex (u6) at (6,3) []{\{$e_2, e_4$\}};
   	\path
   	(u1) edge (u2)
   	(u1) edge (u3)
   	(u1) edge (u4)
   	(u1) edge (u5)
   	(u1) edge (u6)
   	(u2) edge (u3)
   	(u2) edge (u4)
   	(u2) edge (u5)
   	(u2) edge (u6)
   	(u3) edge (u4)
   	(u3) edge (u5)
   	(u3) edge (u6)
   	(u4) edge (u5)
   	(u4) edge (u6)
   	(u5) edge (u6)
   	;
   	\end{tikzpicture}
   	\caption{\label{fig:path} Super Line Graph of index 2, $L_2(P_5)$}
   \end{figure}
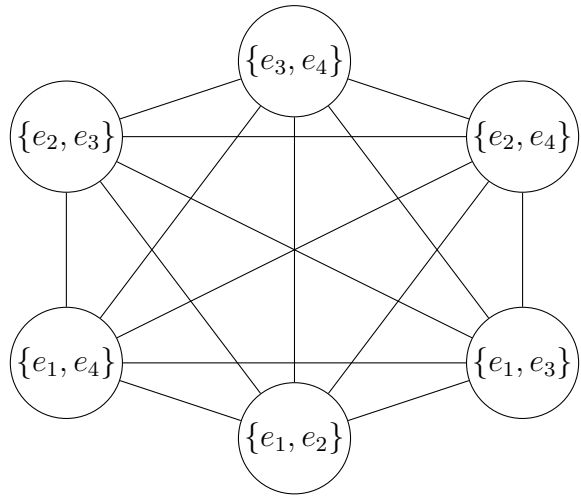

   \section{Line Completion Number of a Graph}
   
   Bagga et al. introduced in \cite{ks} the concept of super line graph in 1995. Bagga, Beineke and Varma contributed mostly in the study of super line graphs. See Table \ref{Tab:litslg} for a summary the literature.

   	\begin{table}[h]
   		\begin{center}
   			\begin{tabular}{|c|c|c|c|}
   				\hline 
   				\textbf{Authors}& \textbf{Concepts} & \textbf{Ref.}  & \textbf{Year} \\
   				\hline
   				\hline
   				Bagga et al.& super line graphs  &\cite{ks}  &1995 \\
   				\hline
   				Bagga et al.& properties of super line graphs  &\cite{bn}  &1995 \\
   				\hline
   				Bagga et al.& line completion number  &\cite{bagga}  &1995 \\
   				\hline
   				Bagga et al.& $L_2(G)$ &\cite{js}  &1999 \\   	   		
   				\hline
   				Bagga et al.&independence number, pancyclicity& \cite{lw} &1999 \\   	   		
   				\hline
   				Guti\'{e}rrez and Llad\'{o}&edge-residual number, dense graphs& \cite{as} &2002 \\
   				\hline
   				Bagga et al.&variations and generalizations of line graphs& \cite{JB} &2004 \\
   				\hline
   				Bagga and Ferrero&adjacency matrix, eigenvalues, isolated vertices & \cite{Ba} & 2005\\
   				\hline
   				Li et al.&path-comprehensiveness, vertex-pancyclicity& \cite{X} &2008 \\
   				\hline
   				Bagga et al.& lc of $K_n, P_n, C_n, F_n, W_n, M_n, K_{m,n}$ & \cite{Jay} & 2016\\
   				\hline
   				Tapadia and Waphare &lc of hypercubes& \cite{SA} &2019\\
   				\hline
   			\end{tabular}
   			\caption{Summary of the literature on Super Line Graph}
   			\label{Tab:litslg}
   		\end{center}
   	\end{table}
  
  Bagga et al. \cite{bagga} defined the line completion number of a graph as the least index $r$ for which super line graph becomes a complete graph.

 One of the important properties of super line graphs is that, if $L_r(G)$ is complete, so is $L_{r+1}(G)$. Bagga et al. determined the line completion numbers for various classes of graphs, viz., complete graph, path, cycle, fan, windmill, wheel, etc. Certain graphs are also characterized with the help of line completion number. In this paper we determine the line completion number of the Grid Graph.

 \section{Line Completion Number of Grid Graph}
Let $P_n$ be a path with n vertices and $n-1$ edges and $P_m$ be a path with $m$ vertices and $m-1$ edges.
 The grid graph $P_n \times P_m$ is the Cartesian product of $P_n$ and $P_m$ (See Figure \ref{fig:gridcommon}). The grid graph has $nm$ vertices and  $n(m-1)+m(n-1)=2mn-m-n$ edges. In $P_n \times P_m$, there are  $n$ $P_m$ paths and $m$ $P_n$ paths. Let the vertical paths be $P_m$'s and horizontal paths be $P_n$'s.
 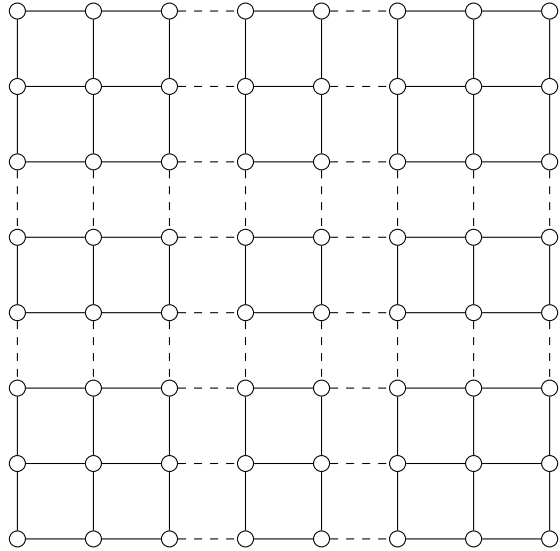
\begin{figure}
	\centering

 	\begin{tikzpicture}
 	\vertex (u1) at (0,4) []{};
 	\vertex (u2) at (0,3) []{};
 	\vertex (u3) at (0,2) []{};
 	\vertex (u4) at (0,1) []{};
 	\vertex (u5) at (0,0) []{};
 	\vertex (u6) at (0,-1) []{};
 	\vertex (u7) at (0,-2) []{};
 	\vertex (u8) at (0,-3) []{};
 	\vertex (u9) at (1,4) []{};
 	\vertex (u10) at (1,3) []{};
 	\vertex (u11) at (1,2) []{};
 	\vertex (u12) at (1,1) []{};
 	\vertex (u13) at (1,0) []{};
 	\vertex (u14) at (1,-1) []{};
 	\vertex (u15) at (1,-2) []{};
 	\vertex (u16) at (1,-3) []{};
 	\vertex (u17) at (2,4) []{};
 	\vertex (u18) at (2,3) []{};
 	\vertex (u19) at (2,2) []{};
 	\vertex (u20) at (2,1) []{};
 	\vertex (u21) at (2,0) []{};
 	\vertex (u22) at (2,-1) []{};
 	\vertex (u23) at (2,-2) []{};
 	\vertex (u24) at (2,-3) []{};
 	\vertex (u25) at (3,4) []{};
 	\vertex (u26) at (3,3) []{};
 	\vertex (u27) at (3,2) []{};
 	\vertex (u28) at (3,1) []{};
 	\vertex (u29) at (3,0) []{};
 	\vertex (u30) at (3,-1) []{};
 	\vertex (u31) at (3,-2) []{};
 	\vertex (u32) at (3,-3) []{};
 	\vertex (u33) at (4,4) []{};
 	\vertex (u34) at (4,3) []{};
 	\vertex (u35) at (4,2) []{};
 	\vertex (u36) at (4,1) []{};
 	\vertex (u37) at (4,0) []{};
 	\vertex (u38) at (4,-1) []{};
 	\vertex (u39) at (4,-2) []{};
 	\vertex (u40) at (4,-3) []{};
 	\vertex (u41) at (5,4) []{};
 	\vertex (u42) at (5,3) []{};
 	\vertex (u43) at (5,2) []{};
 	\vertex (u44) at (5,1) []{};
 	\vertex (u45) at (5,0) []{};
 	\vertex (u46) at (5,-1) []{};
 	\vertex (u47) at (5,-2) []{};
 	\vertex (u48) at (5,-3) []{};
 	\vertex (u49) at (6,4) []{};
 	\vertex (u50) at (6,3) []{};
 	\vertex (u51) at (6,2) []{};
 	\vertex (u52) at (6,1) []{};
 	\vertex (u53) at (6,0) []{};
 	\vertex (u54) at (6,-1) []{};
 	\vertex (u55) at (6,-2) []{};
 	\vertex (u56) at (6,-3) []{};
 	\vertex (u57) at (7,4) []{};
 	\vertex (u58) at (7,3) []{};
 	\vertex (u59) at (7,2) []{};
 	\vertex (u60) at (7,1) []{};
 	\vertex (u61) at (7,0) []{};
 	\vertex (u62) at (7,-1) []{};
 	\vertex (u63) at (7,-2) []{};
 	\vertex (u64) at (7,-3) []{};
 	\path
 	(u1) edge node [above] {} (u2)
 	(u2) edge node [above] {} (u3)
 	(u3) edge [dashed] (u4)
 	(u4) edge node [above] {} (u5)
 	(u5) edge [dashed] (u6)
 	(u6) edge (u7)
 	(u7) edge (u8)
 	(u9) edge node [above] {} (u10)
 	(u10) edge node [above] {} (u11)
 	(u11) edge [dashed] (u12)
 	(u12) edge node [above] {} (u13)
 	(u13) edge [dashed] (u14)
 	(u14) edge (u15)
 	(u15) edge (u16)
 	(u17) edge node [above] {} (u18)
 	(u18) edge node [above] {} (u19)
 	(u19) edge [dashed] (u20)
 	(u20) edge node [above] {} (u21)
 	(u21) edge [dashed] (u22)
 	(u22) edge (u23)
 	(u23) edge (u24)
 	(u25) edge node [above] {} (u26)
 	(u26) edge node [above] {} (u27)
 	(u27) edge [dashed] (u28)
 	(u28) edge node [above] {} (u29)
 	(u29) edge [dashed] (u30)
 	(u30) edge (u31)
 	(u31) edge (u32)
 	(u33) edge node [above] {} (u34)
 	(u34) edge node [above] {} (u35)
 	(u35) edge [dashed] (u36)
 	(u36) edge node [above] {} (u37)
 	(u37) edge [dashed] (u38)
 	(u38) edge (u39)
 	(u39) edge (u40)
 	(u41) edge node [above] {} (u42)
 	(u42) edge node [above] {} (u43)
 	(u43) edge [dashed] (u44)
 	(u44) edge node [above] {} (u45)
 	(u45) edge [dashed] (u46)
 	(u46) edge (u47)
 	(u47) edge (u48)
 	(u49) edge node [above] {} (u50)
 	(u50) edge node [above] {} (u51)
 	(u51) edge [dashed] (u52)
 	(u52) edge node [above] {} (u53)
 	(u53) edge [dashed] (u54)
 	(u54) edge (u55)
 	(u55) edge (u56)
 	(u57) edge node [above] {} (u58)
 	(u58) edge node [above] {} (u59)
 	(u59) edge [dashed] (u60)
 	(u60) edge node [above] {} (u61)
 	(u61) edge [dashed] (u62)
 	(u62) edge (u63)
 	(u63) edge (u64)
 	(u1) edge node [above] {} (u9)
 	(u9) edge node [above] {} (u17)
 	(u17) edge [dashed] (u25)
 	(u25) edge node [above] {} (u33)
 	(u33) edge [dashed] (u41)
 	(u41) edge (u49)
 	(u49) edge (u57)
 	(u2) edge node [above] {} (u10)
 	(u10) edge node [above] {} (u18)
 	(u18) edge [dashed] (u26)
 	(u26) edge node [above] {} (u34)
 	(u34) edge [dashed] (u42)
 	(u42) edge (u50)
 	(u50) edge (u58)
 	(u3) edge node [above] {} (u11)
 	(u11) edge node [above] {} (u19)
 	(u19) edge [dashed] (u27)
 	(u27) edge node [above] {} (u35)
 	(u35) edge [dashed] (u43)
 	(u43) edge (u51)
 	(u51) edge (u59)
 	(u4) edge node [above] {} (u12)
 	(u12) edge node [above] {} (u20)
 	(u20) edge [dashed] (u28)
 	(u28) edge node [above] {} (u36)
 	(u36) edge [dashed] (u44)
 	(u44) edge (u52)
 	(u52) edge (u60)
 	(u5) edge node [above] {} (u13)
 	(u13) edge node [above] {} (u21)
 	(u21) edge [dashed] (u29)
 	(u29) edge node [above] {} (u37)
 	(u37) edge [dashed] (u45)
 	(u45) edge (u53)
 	(u53) edge (u61)
 	(u6) edge node [above] {} (u14)
 	(u14) edge node [above] {} (u22)
 	(u22) edge [dashed] (u30)
 	(u30) edge node [above] {} (u38)
 	(u38) edge [dashed] (u46)
 	(u46) edge (u54)
 	(u54) edge (u62)
 	(u7) edge node [above] {} (u15)
 	(u15) edge node [above] {} (u23)
 	(u23) edge [dashed] (u31)
 	(u31) edge node [above] {} (u39)
 	(u39) edge [dashed] (u47)
 	(u47) edge (u55)
 	(u55) edge (u63)
 	(u8) edge node [above] {} (u16)
 	(u16) edge node [above] {} (u24)
 	(u24) edge [dashed] (u32)
 	(u32) edge node [above] {} (u40)
 	(u40) edge [dashed] (u48)
 	(u48) edge (u56)
 	(u56) edge (u64)
 	
 	;
 	\end{tikzpicture}

	\caption{A general finite grid}
	\label{fig:gridcommon}
\end{figure}

 \begin{theorem}
 Let $P_n$ be a path with n vertices and $P_m$ be a path with m vertices. The line completion number of grid graph $P_n \times P_m$, 
 	$$
 	lc(P_n \times P_m)=
 	\begin{cases}
 	0 &\text{when }n = m =1\\
 	\lfloor \frac{\max{(n, m)}}{2} \rfloor & \text{when exactly one of n or m is 1}\\
 	mn+1-\frac{(m+n)}{2}-\frac{\min{(m, n)}}{2} &\text{when n and m are even}\\
 	mn-\frac{(m+n)}{2}-\frac{\min{(m, n)}+1}{2} &\text{when  }n\ne 1\text{  and }m\ne 1 \text{ are odd}\\
 	mn+1-\min{(m, n)}-\lceil\frac{\max{(m, n)}}{2}\rceil &\text{when  }n\ne 1\text{  and }m\ne 1 \text{ are of opposite parity}
 	\end{cases}
 	$$
 \end{theorem}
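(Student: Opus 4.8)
The plan is to convert completeness of $L_r$ into a purely structural condition on $G$ and then solve an extremal edge-counting problem on the grid. First I would prove the reduction lemma: two $r$-edge sets $S_1,S_2$ are non-adjacent in $L_r(G)$ precisely when the vertex sets they cover are disjoint, since a common vertex forces two incident edges (one from each set) to meet, while vertex-disjointness forbids any incidence. Hence $L_r(G)$ fails to be complete exactly when $G$ contains two vertex-disjoint edge sets of size $r$, so that, writing $e(X)$ for the number of edges with both ends in $X$ and
\[
f(P_n\times P_m)=\max\{\,\min(e(A),e(B)) : A,B\subseteq V,\ A\cap B=\varnothing\,\},
\]
we get $lc(P_n\times P_m)=1+f(P_n\times P_m)$. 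Everything reduces to finding the vertex bipartition that maximizes the smaller of the two induced edge-counts.

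For the lower bound I would exhibit an explicit partition in each parity class and compute $\min(e(A),e(B))$. The base construction is a straight cut: a vertical cut severs one edge per row and a horizontal cut one per column, so I cut in whichever direction severs only $\min(m,n)$ edges and place the two resulting subgrids into $A$ and $B$. When the side being cut is even, this splits $G$ into two congruent subgrids and yields the even--even value $mn-\tfrac{m+n}{2}-\tfrac{\min(m,n)}{2}$ after adding $1$. When a dimension is odd the straight cut is unbalanced, so I would transfer a half-row (an L-shaped or staircase strip) from the larger side to the smaller and recompute the counts to absorb the parity defect; matching these adjusted counts to the odd--odd and opposite-parity branches is a finite case check keyed to $\lfloor\cdot\rfloor$, $\lceil\cdot\rceil$, and $\min$. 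The degenerate case $m=1$ is just the path bound $\lfloor n/2\rfloor$ and follows by splitting the path as evenly as possible while discarding the single crossing edge.

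The hard part will be the matching upper bound: showing that no partition beats the construction. The clean route is to bound $e(A)\le E(|A|)$ and $e(B)\le E(|B|)$, where $E(k)$ is the maximum number of edges induced by any $k$ vertices of the grid, giving
\[
f(P_n\times P_m)\le \max_{a+b\le mn}\min\bigl(E(a),E(b)\bigr),
\]
with the maximum attained near $a=b=mn/2$. This forces me to establish the grid edge-isoperimetric inequality---identifying the densest $k$-vertex subgraphs (essentially filled rectangles/quasi-squares)---which is exactly the delicate, known-to-be-subtle part of the argument. The extra twist is that $A$ and $B$ must be complementary: even if each of $A,B$ could individually be a dense rectangle, disjointness means at most one can be a perfect rectangle while the other is its (sparser, L-shaped) complement, and it is this interaction that pins down the exact constant. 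I expect the small odd cases to be the most error-prone, since there the complement of the densest half is comparatively sparse and the rounding in the $\min(m,n)$ and $\lceil\max(m,n)/2\rceil$ terms is tight; these would need to be verified by hand and reconciled with the general bound, and I would check the smallest instances (such as $3\times 3$) explicitly before trusting the uniform formula.
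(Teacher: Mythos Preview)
Your reduction and constructions mirror the paper's approach: the paper also frames the problem as partitioning $V(P_n\times P_m)$ into two sets so as to maximize the smaller induced edge count, exhibits the same straight and ``staircase'' slicings you describe for each parity class, and then adds $1$. Where you go further is on the upper bound. The paper simply compares the vertical cut against the horizontal (or almost-vertical against almost-horizontal) and declares the winner ``optimal'' without ever arguing that no other bipartition does better; your plan to invoke the grid edge-isoperimetric inequality and then account for the complementarity of $A$ and $B$ is exactly the missing ingredient, so on this point your outline is actually more complete than the paper's own argument.

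One genuine gap to watch: your reduction lemma ``$S_1,S_2$ non-adjacent in $L_r(G)$ $\Leftrightarrow$ $V(S_1)\cap V(S_2)=\varnothing$'' is false in the forward direction. If $S_1$ and $S_2$ share an edge and all remaining edges of $S_1\cup S_2$ are vertex-disjoint from it, the pair is non-adjacent yet not vertex-disjoint; for $G=3K_2$ with $r=2$, $S_1=\{e_1,e_2\}$, $S_2=\{e_2,e_3\}$ one gets $lc(G)=3$ while your formula $1+f(G)$ gives $2$. The backward implication alone yields only $lc\ge 1+f$. For the grid the equality $lc=1+f$ is still true, but you need an extra step: show that any non-adjacent pair with $C=S_1\cap S_2\ne\varnothing$ can be replaced by a vertex-disjoint pair of the same size (the structure forces $C$ to be a matching vertex-disjoint from $S_1\setminus C$ and $S_2\setminus C$, and in a non-degenerate grid every edge has neighbours, which lets you redistribute $C$ and find replacement edges in the complement). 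The paper glosses over this same point, so it is less a divergence from the paper than a subtlety both arguments owe the reader.
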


 \begin{proof}
 	We prove the theorem by analysing all the five cases.
 	Let $P_n$ be the path with $n$ vertices and $n-1$ edges and $P_m$ be the path with $m$ vertices and $m-1$ edges. The grid graphs $P_n \times P_m$ and $P_m \times P_n$ are isomorphic graphs.
 	
 	To find the line completion number of $P_n \times P_m$, our aim is to partition the vertex set of $P_n \times P_m$ into two subsets $A$ and $B$, so that the subgraphs induced by the two sets $A$ and $B$, viz., $A_k$ and $B_k$, respectively, are not adjacent in $P_n \times P_m$. While doing this, we ensure to simultaneously maximize the number of edges in  $A_k$ and $B_k$ and minimize the edges that are not in either of these subgraphs. Once we obtain these optimal graphs, adding an edge that is not in $A_k$ or $B_k$ would fetch us the required result. 
 	
 	For the grid graph  $P_n \times P_m$, in a Cartesian grid presentation, let the horizontal and the vertical paths be the $P_n$'s and the $P_m$'s, respectively. 
 	 	
 	\section*{Case 1: $n = m =1$}
 	
  	The graph $P_1 \times P_1$ is $P_1$ with no edges. Hence, the line completion number trivially is 0.
 	
 	\section*{Case 2: when exactly one of $n$ or $m$ is 1}
 	
 	The graph in this case is either $P_1 \times P_m$ or $P_n \times P_1$. It is nothing but the path graph. Hence, without loss of generality we  consider the graph $P_n$. 
 	
 	\subsection*{Subcase 2.1: $n$ is odd}

The graph $P_n$ has $n-1$ edges. Then if we consider a subset of the edge set of $P_n$ with the cardinality ranging from 1 to $\frac{(n-1)}{2}-1$, there is at least one other subset of the same size that is not adjacent to it. Hence in all such cases, the super line graph is not a complete graph. The two subsets of edges that are not assured to be non-adjacent to each other of the same size are the subsets of consecutive edges from the pendant vertices of the path. See the illustration in Figure \ref{fig:path1}.
 	\begin{figure}
 		\centering
 		\begin{tikzpicture}
 		\vertex [fill=black,circle,inner sep=2pt] (u1) at (0,0) []{};
 		\vertex [fill=black,circle,inner sep=2pt] (u2) at (1,0) []{};
 		\vertex [fill=black,circle,inner sep=2pt] (u3) at (2,0) []{};
 		\vertex [fill=black,circle,inner sep=2pt] (u4) at (3,0) []{};
 		\vertex (u5) at (4,0) []{};
 		\vertex [fill=black,circle,inner sep=2pt] (u6) at (5,0) []{};
 		\vertex [fill=black,circle,inner sep=2pt] (u7) at (6,0) []{};
 		\vertex [fill=black,circle,inner sep=2pt] (u8) at (7,0) []{};
 		\vertex [fill=black,circle,inner sep=2pt] (u9) at (8,0) []{};
 		\path
 		(u1) edge node [above] {} (u2)
 		(u2) edge node [above] {} (u3)
 		(u3) edge node [above] {} (u4)
 		(u4) edge node [above] {} (u5)
 		(u5) edge node [above] {} (u6)
 		(u6) edge node [above] {} (u7)
 		(u7) edge node [above] {} (u8)
 		(u8) edge node [above] {} (u9)
 		;
 		\end{tikzpicture}
 		
 		\caption{A path with odd number of vertices}
 		\label{fig:path1}
 	\end{figure}
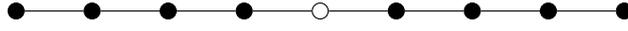
 	
 	However, if we take a subset of the edge set of size $\frac{(n-1)}{2}$, it is adjacent to every subset of size $\frac{(n-1)}{2}$.
 	
 	Thus, the line completion number of $P_n$ is $\frac{(n-1)}{2}-1$+1=$\frac{(n-1)}{2}$.
 	
 	\subsection*{Subcase 2.2: $n$ is even}
 	
 	Continuing in the same line of argument, we can see that the two connected subgraphs each with $\frac{n}{2}$ vertices formed with the two pendant vertices in either of the graphs are non-adjacent. Each of them is of size $\frac{(n-2)}{2}$. See the illustration in Figure \ref{fig:path2}.
 	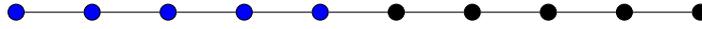
\begin{figure}
 		\centering
 		\begin{tikzpicture}
 		\vertex [fill=blue,circle,inner sep=2pt] (u1) at (0,0) []{};
 		\vertex [fill=blue,circle,inner sep=2pt] (u2) at (1,0) []{};
 		\vertex [fill=blue,circle,inner sep=2pt] (u3) at (2,0) []{};
 		\vertex [fill=blue,circle,inner sep=2pt] (u4) at (3,0) []{};
 		\vertex (u5) [fill=blue,circle,inner sep=2pt] at (4,0) []{};
 		\vertex [fill=black,circle,inner sep=2pt] (u6) at (5,0) []{};
 		\vertex [fill=black,circle,inner sep=2pt] (u7) at (6,0) []{};
 		\vertex [fill=black,circle,inner sep=2pt] (u8) at (7,0) []{};
 		\vertex [fill=black,circle,inner sep=2pt] (u9) at (8,0) []{};
 		\vertex [fill=black,circle,inner sep=2pt] (u10) at (9,0) []{};
 		\path
 		(u1) edge node [above] {} (u2)
 		(u2) edge node [above] {} (u3)
 		(u3) edge node [above] {} (u4)
 		(u4) edge node [above] {} (u5)
 		(u5) edge node [above] {} (u6)
 		(u6) edge node [above] {} (u7)
 		(u7) edge node [above] {} (u8)
 		(u8) edge node [above] {} (u9)
 		(u9) edge node [above] {} (u10)
 		;
 		\end{tikzpicture}
 		
 		\caption{A path with even number of vertices}
 		\label{fig:path2}
 	\end{figure}
 	
 	Thus, the line completion number of $P_n$ is $\frac{(n-2)}{2}$+1=$\frac{n}{2}$.
 	
 	Combining both the cases, we conclude that, the line completion number of a path $P_n \times P_m$ when exactly one of $n$ or $m$ is 1, is $\lfloor\frac{\max(n,m)}{2}\rfloor$. This result is equivalent to the result found in \cite{Jay}.

 	\subsection*{Slicing of the Grid: Various Scenarios}
 	
 	As indicated in the proof strategy, we need to slice the grid $P_n \times P_m$. In doing so we consider the following cases.
 	
 	\subsubsection*{Scenario 1: $n$ is even}

 	Here the vertical slicing of the graph through the central edges of $P_n$'s will give us two $P_{\frac{n}{2}}\times P_m$'s that are not adjacent to each other and an $mK_2$. This is illustrated in the Figure \ref{fig:gridv01}. 
 	
 		\begin{figure}
 		
 		\centering
 		\begin{tikzpicture}
 		\vertex (u1) at (0,4) []{};
 		\vertex (u2) at (0,3) []{};
 		\vertex (u3) at (0,2) []{};
 		\vertex (u4) at (0,1) []{};
 		\vertex (u5) at (0,0) []{};
 		
 		\vertex (u9) at (1,4) []{};
 		\vertex (u10) at (1,3) []{};
 		\vertex (u11) at (1,2) []{};
 		\vertex (u12) at (1,1) []{};
 		\vertex (u13) at (1,0) []{};
 		
 		\vertex (u17) at (2,4) []{};
 		\vertex (u18) at (2,3) []{};
 		\vertex (u19) at (2,2) []{};
 		\vertex (u20) at (2,1) []{};
 		\vertex (u21) at (2,0) []{};
 		
 		\vertex (u25) at (3,4) []{};
 		\vertex (u26) at (3,3) []{};
 		\vertex (u27) at (3,2) []{};
 		\vertex (u28) at (3,1) []{};
 		\vertex (u29) at (3,0) []{};
 		
 		\vertex (u33) at (4,4) []{};
 		\vertex (u34) at (4,3) []{};
 		\vertex (u35) at (4,2) []{};
 		\vertex (u36) at (4,1) []{};
 		\vertex (u37) at (4,0) []{};
 		
 		\vertex (u41) at (5,4) []{};
 		\vertex (u42) at (5,3) []{};
 		\vertex (u43) at (5,2) []{};
 		\vertex (u44) at (5,1) []{};
 		\vertex (u45) at (5,0) []{};
 		
 		\vertex (u49) at (6,4) []{};
 		\vertex (u50) at (6,3) []{};
 		\vertex (u51) at (6,2) []{};
 		\vertex (u52) at (6,1) []{};
 		\vertex (u53) at (6,0) []{};
 		
 		\vertex (u57) at (7,4) []{};
 		\vertex (u58) at (7,3) []{};
 		\vertex (u59) at (7,2) []{};
 		\vertex (u60) at (7,1) []{};
 		\vertex (u61) at (7,0) []{};

 		\path
 		(u1) edge node [above] {} (u2)
 		(u2) edge node [above] {} (u3)
 		(u3) edge node [above] {} (u4)
 		(u4) edge node [above] {} (u5)
 		
 		(u9) edge node [above] {} (u10)
 		(u10) edge node [above] {} (u11)
 		(u11) edge node [above] {}(u12)
 		(u12) edge node [above] {} (u13)
 		
 		(u17) edge node [above] {} (u18)
 		(u18) edge node [above] {} (u19)
 		(u19) edge node [above] {} (u20)
 		(u20) edge node [above] {} (u21)
 		
 		(u25) edge node [above] {} (u26)
 		(u26) edge node [above] {} (u27)
 		(u27) edge node [above] {} (u28)
 		(u28) edge node [above] {} (u29)
 		
 		(u33) edge node [above] {} (u34)
 		(u34) edge node [above] {} (u35)
 		(u35) edge node [above] {} (u36)
 		(u36) edge node [above] {} (u37)
 		
 		(u41) edge node [above] {} (u42)
 		(u42) edge node [above] {} (u43)
 		(u43) edge node [above] {} (u44)
 		(u44) edge node [above] {} (u45)
 		
 		(u49) edge node [above] {} (u50)
 		(u50) edge node [above] {} (u51)
 		(u51) edge node [above] {} (u52)
 		(u52) edge node [above] {} (u53)
 		
 		(u57) edge node [above] {} (u58)
 		(u58) edge node [above] {} (u59)
 		(u59) edge node [above] {} (u60)
 		(u60) edge node [above] {} (u61)
 		
 		(u1) edge node [above] {} (u9)
 		(u9) edge node [above] {} (u17)
 		(u17) edge node [above] {} (u25)
 		(u25) edge [line width=3pt] (u33)
 		(u33) edge node [above] {} (u41)
 		(u41) edge (u49)
 		(u49) edge (u57)
 		(u2) edge node [above] {} (u10)
 		(u10) edge node [above] {} (u18)
 		(u18) edge node [above] {} (u26)
 		(u26) edge [line width=3pt] (u34)
 		(u34) edge node [above] {} (u42)
 		(u42) edge (u50)
 		(u50) edge (u58)
 		(u3) edge node [above] {} (u11)
 		(u11) edge node [above] {} (u19)
 		(u19) edge node [above] {} (u27)
 		(u27) edge [line width=3pt] (u35)
 		(u35) edge node [above] {} (u43)
 		(u43) edge (u51)
 		(u51) edge (u59)
 		(u4) edge node [above] {} (u12)
 		(u12) edge node [above] {} (u20)
 		(u20) edge node [above] {} (u28)
 		(u28) edge [line width=3pt] (u36)
 		(u36) edge node [above] {} (u44)
 		(u44) edge (u52)
 		(u52) edge (u60)
 		(u5) edge node [above] {} (u13)
 		(u13) edge node [above] {} (u21)
 		(u21) edge node [above] {} (u29)
 		(u29) edge [line width=3pt] (u37)
 		(u37) edge node [above] {} (u45)
 		(u45) edge (u53)
 		(u53) edge (u61)
 		;
 		\end{tikzpicture}
 		
 		\caption{A vertical slicing}
 		\label{fig:gridv01}
 	\end{figure}
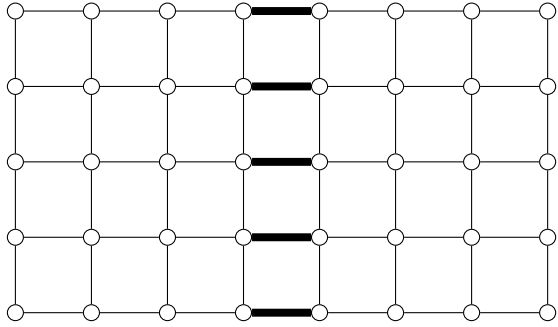
 	
 	\subsubsection*{Scenario 2: $m$ is even}
 	 
 	If $m$ is even a horizontal slicing of the graph can be done as illustrated in Figure \ref{fig:gridh01}. This would generate an $nK_2$ whose edges are not in the slices.
 		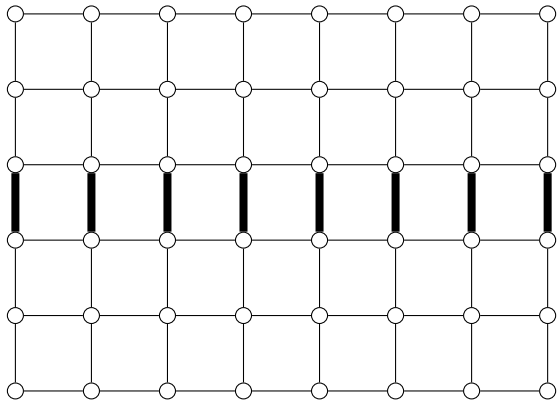
\begin{figure}
 		\centering
 			\begin{tikzpicture}
 	\vertex (u1) at (0,4) []{};
 	\vertex (u2) at (0,3) []{};
 	\vertex (u3) at (0,2) []{};
 	\vertex (u4) at (0,1) []{};
 	\vertex (u5) at (0,0) []{};
 	\vertex (u6) at (0,-1) []{};
 	
 	\vertex (u9) at (1,4) []{};
 	\vertex (u10) at (1,3) []{};
 	\vertex (u11) at (1,2) []{};
 	\vertex (u12) at (1,1) []{};
 	\vertex (u13) at (1,0) []{};
 	\vertex (u14) at (1,-1) []{};
 
 	\vertex (u17) at (2,4) []{};
 	\vertex (u18) at (2,3) []{};
 	\vertex (u19) at (2,2) []{};
 	\vertex (u20) at (2,1) []{};
 	\vertex (u21) at (2,0) []{};
 	\vertex (u22) at (2,-1) []{};
 	
 	\vertex (u25) at (3,4) []{};
 	\vertex (u26) at (3,3) []{};
 	\vertex (u27) at (3,2) []{};
 	\vertex (u28) at (3,1) []{};
 	\vertex (u29) at (3,0) []{};
 	\vertex (u30) at (3,-1) []{};

 	\vertex (u33) at (4,4) []{};
 	\vertex (u34) at (4,3) []{};
 	\vertex (u35) at (4,2) []{};
 	\vertex (u36) at (4,1) []{};
 	\vertex (u37) at (4,0) []{};
 	\vertex (u38) at (4,-1) []{};
 	
 	\vertex (u41) at (5,4) []{};
 	\vertex (u42) at (5,3) []{};
 	\vertex (u43) at (5,2) []{};
 	\vertex (u44) at (5,1) []{};
 	\vertex (u45) at (5,0) []{};
 	\vertex (u46) at (5,-1) []{};
 	
 	\vertex (u49) at (6,4) []{};
 	\vertex (u50) at (6,3) []{};
 	\vertex (u51) at (6,2) []{};
 	\vertex (u52) at (6,1) []{};
 	\vertex (u53) at (6,0) []{};
 	\vertex (u54) at (6,-1) []{};
 	
 	\vertex (u57) at (7,4) []{};
 	\vertex (u58) at (7,3) []{};
 	\vertex (u59) at (7,2) []{};
 	\vertex (u60) at (7,1) []{};
 	\vertex (u61) at (7,0) []{};
 	\vertex (u62) at (7,-1) []{};
 	\path
 	(u1) edge node [above] {} (u2)
 	(u2) edge node [above] {} (u3)
 	(u3) edge[line width= 3pt] (u4)
 	(u4) edge node [above] {} (u5)
 	(u5) edge node [above] {} (u6)

 	(u9) edge node [above] {} (u10)
 	(u10) edge node [above] {} (u11)
 	(u11) edge[line width= 3pt] (u12)
 	(u12) edge node [above] {} (u13)
 	(u13) edge node [above] {} (u14)
 	
 	(u17) edge node [above] {} (u18)
 	(u18) edge node [above] {} (u19)
 	(u19) edge[line width= 3pt] (u20)
 	(u20) edge node [above] {} (u21)
 	(u21) edge node [above] {} (u22)
 	
 	(u25) edge node [above] {} (u26)
 	(u26) edge node [above] {} (u27)
 	(u27) edge[line width= 3pt] (u28)
 	(u28) edge node [above] {} (u29)
 	(u29) edge node [above] {} (u30)
 	
 	(u33) edge node [above] {} (u34)
 	(u34) edge node [above] {} (u35)
 	(u35) edge[line width= 3pt] (u36)
 	(u36) edge node [above] {} (u37)
 	(u37) edge node [above] {} (u38)
 	
 	(u41) edge node [above] {} (u42)
 	(u42) edge node [above] {} (u43)
 	(u43) edge[line width= 3pt] (u44)
 	(u44) edge node [above] {} (u45)
 	(u45) edge node [above] {} (u46)
 	
 	(u49) edge node [above] {} (u50)
 	(u50) edge node [above] {} (u51)
 	(u51) edge[line width= 3pt] (u52)
 	(u52) edge node [above] {} (u53)
 	(u53) edge node [above] {} (u54)
 	
 	(u57) edge node [above] {} (u58)
 	(u58) edge node [above] {} (u59)
 	(u59) edge[line width= 3pt] (u60)
 	(u60) edge node [above] {} (u61)
 	(u61) edge node [above] {} (u62)
 	
 	(u1) edge node [above] {} (u9)
 	(u9) edge node [above] {} (u17)
 	(u17) edge node [above] {} (u25)
 	(u25) edge node [above] {} (u33)
 	(u33) edge node [above] {} (u41)
 	(u41) edge (u49)
 	(u49) edge (u57)
 	(u2) edge node [above] {} (u10)
 	(u10) edge node [above] {} (u18)
 	(u18) edge node [above] {} (u26)
 	(u26) edge node [above] {} (u34)
 	(u34) edge node [above] {} (u42)
 	(u42) edge (u50)
 	(u50) edge (u58)
 	(u3) edge node [above] {} (u11)
 	(u11) edge node [above] {} (u19)
 	(u19) edge node [above] {} (u27)
 	(u27) edge node [above] {} (u35)
 	(u35) edge node [above] {} (u43)
 	(u43) edge (u51)
 	(u51) edge (u59)
 	(u4) edge node [above] {} (u12)
 	(u12) edge node [above] {} (u20)
 	(u20) edge node [above] {} (u28)
 	(u28) edge node [above] {} (u36)
 	(u36) edge node [above] {} (u44)
 	(u44) edge (u52)
 	(u52) edge (u60)
 	(u5) edge node [above] {} (u13)
 	(u13) edge node [above] {} (u21)
 	(u21) edge node [above] {} (u29)
 	(u29) edge node [above] {} (u37)
 	(u37) edge node [above] {} (u45)
 	(u45) edge (u53)
 	(u53) edge (u61)
 	(u6) edge node [above] {} (u14)
 	(u14) edge node [above] {} (u22)
 	(u22) edge node [above] {} (u30)
 	(u30) edge node [above] {} (u38)
 	(u38) edge node [above] {} (u46)
 	(u46) edge (u54)
 	(u54) edge (u62)
 	
 	;
 	\end{tikzpicture}
 
 		\caption{A horizontal slicing}
 		\label{fig:gridh01}
 	\end{figure}

 	\subsubsection*{Scenario 3: $n$ is odd}
 		
 	Since $n$ is odd, there are two central edges for the horizontal $P_n$'s. Hence, to get two slices of the grid that are equal in size, we do a slicing as presented in Figure \ref{fig:gridv02}.
 	
 	\begin{figure}
 		\centering
 		\begin{tikzpicture}
 	\vertex (u1) at (0,4) []{};
 	\vertex (u2) at (0,3) []{};
 	\vertex (u3) at (0,2) []{};
 	\vertex (u4) at (0,1) []{};
 	\vertex (u5) at (0,0) []{};
 	\vertex (u6) at (0,-1) []{};
 	
 	\vertex (u9) at (1,4) []{};
 	\vertex (u10) at (1,3) []{};
 	\vertex (u11) at (1,2) []{};
 	\vertex (u12) at (1,1) []{};
 	\vertex (u13) at (1,0) []{};
 	\vertex (u14) at (1,-1) []{};
 
 	\vertex (u17) at (2,4) []{};
 	\vertex (u18) at (2,3) []{};
 	\vertex (u19) at (2,2) []{};
 	\vertex (u20) at (2,1) []{};
 	\vertex (u21) at (2,0) []{};
 	\vertex (u22) at (2,-1) []{};
 	
 	\vertex (u25) at (3,4) []{};
 	\vertex (u26) at (3,3) []{};
 	\vertex (u27) at (3,2) []{};
 	\vertex (u28) at (3,1) []{};
 	\vertex (u29) at (3,0) []{};
 	\vertex (u30) at (3,-1) []{};

 	\vertex (u33) at (4,4) []{};
 	\vertex (u34) at (4,3) []{};
 	\vertex (u35) at (4,2) []{};
 	\vertex (u36) at (4,1) []{};
 	\vertex (u37) at (4,0) []{};
 	\vertex (u38) at (4,-1) []{};
 	
 	\vertex (u41) at (5,4) []{};
 	\vertex (u42) at (5,3) []{};
 	\vertex (u43) at (5,2) []{};
 	\vertex (u44) at (5,1) []{};
 	\vertex (u45) at (5,0) []{};
 	\vertex (u46) at (5,-1) []{};
 	
 	\vertex (u49) at (6,4) []{};
 	\vertex (u50) at (6,3) []{};
 	\vertex (u51) at (6,2) []{};
 	\vertex (u52) at (6,1) []{};
 	\vertex (u53) at (6,0) []{};
 	\vertex (u54) at (6,-1) []{};

 	\path
 	(u1) edge node [above] {} (u2)
 	(u2) edge node [above] {} (u3)
 	(u3) edge node [above] {} (u4)
 	(u4) edge node [above] {} (u5)
 	(u5) edge node [above] {} (u6)

 	(u9) edge node [above] {} (u10)
 	(u10) edge node [above] {} (u11)
 	(u11) edge node [above] {} (u12)
 	(u12) edge node [above] {} (u13)
 	(u13) edge node [above] {} (u14)
 	
 	(u17) edge node [above] {} (u18)
 	(u18) edge node [above] {} (u19)
 	(u19) edge node [above] {} (u20)
 	(u20) edge node [above] {} (u21)
 	(u21) edge node [above] {} (u22)
 	
 	(u25) edge node [above] {} (u26)
 	(u26) edge node [above] {} (u27)
 	(u27) edge [line width= 3pt]  (u28)
 	(u28) edge node [above] {} (u29)
 	(u29) edge node [above] {} (u30)
 	
 	(u33) edge node [above] {} (u34)
 	(u34) edge node [above] {} (u35)
 	(u35) edge node [above] {} (u36)
 	(u36) edge node [above] {} (u37)
 	(u37) edge node [above] {} (u38)
 	
 	(u41) edge node [above] {} (u42)
 	(u42) edge node [above] {} (u43)
 	(u43) edge node [above] {} (u44)
 	(u44) edge node [above] {} (u45)
 	(u45) edge node [above] {} (u46)
 	
 	(u49) edge node [above] {} (u50)
 	(u50) edge node [above] {} (u51)
 	(u51) edge node [above] {} (u52)
 	(u52) edge node [above] {} (u53)
 	(u53) edge node [above] {} (u54)

 	(u1) edge node [above] {} (u9)
 	(u9) edge node [above] {} (u17)
 	(u17) edge node [above] {} (u25)
 	(u25) edge [line width= 3pt] (u33)
 	(u33) edge node [above] {} (u41)
 	(u41) edge (u49)
 	
 	(u2) edge node [above] {} (u10)
 	(u10) edge node [above] {} (u18)
 	(u18) edge node [above] {} (u26)
 	(u26) edge [line width= 3pt] (u34)
 	(u34) edge node [above] {} (u42)
 	(u42) edge (u50)
 	
 	(u3) edge node [above] {} (u11)
 	(u11) edge node [above] {} (u19)
 	(u19) edge node [above] {} (u27)
 	(u27) edge [line width= 3pt] (u35)
 	(u35) edge node [above] {} (u43)
 	(u43) edge (u51)
 	
 	(u4) edge node [above] {} (u12)
 	(u12) edge node [above] {} (u20)
 	(u20) edge [line width= 3pt]  (u28)
 	(u28) edge node [above] {} (u36)
 	(u36) edge node [above] {} (u44)
 	(u44) edge (u52)
 	
 	(u5) edge node [above] {} (u13)
 	(u13) edge node [above] {} (u21)
 	(u21) edge [line width= 3pt] (u29)
 	(u29) edge node [above] {} (u37)
 	(u37) edge node [above] {} (u45)
 	(u45) edge (u53)
 	
 	(u6) edge node [above] {} (u14)
 	(u14) edge node [above] {} (u22)
 	(u22) edge [line width= 3pt] (u30)
 	(u30) edge node [above] {} (u38)
 	(u38) edge node [above] {} (u46)
 	(u46) edge (u54)
  	
 	;
 	\end{tikzpicture}
 	
 		\caption{An almost verticial slicing}
 		\label{fig:gridv02}
 	\end{figure}
 	\subsubsection*{Scenario 4: $m$ is odd}
 	
 	Horizontal slicing of the grid graphs also can be done in an analogous manner. (See Figure \ref{fig:gridh02}.)

 	\subsubsection*{Scenario 5: $m$ and $n$ are odd}
 	
 	In this case, the number of edges that do not form part of the partition is $m+3$ and $n+3$, based on the vertical and horizontal slicing, respectively. See Figure \ref{fig:gridv03} and \ref{fig:gridh03}.
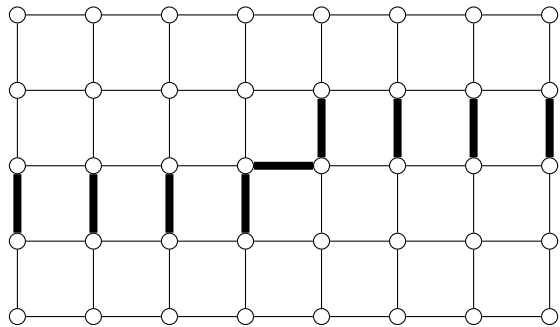
\begin{figure}
	\centering
	\begin{tikzpicture}
 	\vertex (u1) at (0,4) []{};
 	\vertex (u2) at (0,3) []{};
 	\vertex (u3) at (0,2) []{};
 	\vertex (u4) at (0,1) []{};
 	\vertex (u5) at (0,0) []{};

 	\vertex (u9) at (1,4) []{};
 	\vertex (u10) at (1,3) []{};
 	\vertex (u11) at (1,2) []{};
 	\vertex (u12) at (1,1) []{};
 	\vertex (u13) at (1,0) []{};

 	\vertex (u17) at (2,4) []{};
 	\vertex (u18) at (2,3) []{};
 	\vertex (u19) at (2,2) []{};
 	\vertex (u20) at (2,1) []{};
 	\vertex (u21) at (2,0) []{};
 	
 	\vertex (u25) at (3,4) []{};
 	\vertex (u26) at (3,3) []{};
 	\vertex (u27) at (3,2) []{};
 	\vertex (u28) at (3,1) []{};
 	\vertex (u29) at (3,0) []{};

 	\vertex (u33) at (4,4) []{};
 	\vertex (u34) at (4,3) []{};
 	\vertex (u35) at (4,2) []{};
 	\vertex (u36) at (4,1) []{};
 	\vertex (u37) at (4,0) []{};

 	\vertex (u41) at (5,4) []{};
 	\vertex (u42) at (5,3) []{};
 	\vertex (u43) at (5,2) []{};
 	\vertex (u44) at (5,1) []{};
 	\vertex (u45) at (5,0) []{};

 	\vertex (u49) at (6,4) []{};
 	\vertex (u50) at (6,3) []{};
 	\vertex (u51) at (6,2) []{};
 	\vertex (u52) at (6,1) []{};
 	\vertex (u53) at (6,0) []{};

 	\vertex (u57) at (7,4) []{};
 	\vertex (u58) at (7,3) []{};
 	\vertex (u59) at (7,2) []{};
 	\vertex (u60) at (7,1) []{};
 	\vertex (u61) at (7,0) []{};
 	
 	\path
 	(u1) edge node [above] {} (u2)
 	(u2) edge node [above] {} (u3)
 	(u3) edge[line width= 3pt] (u4)
 	(u4) edge node [above] {} (u5)

 	(u9) edge node [above] {} (u10)
 	(u10) edge node [above] {} (u11)
 	(u11) edge[line width= 3pt] (u12)
 	(u12) edge node [above] {} (u13)

 	(u17) edge node [above] {} (u18)
 	(u18) edge node [above] {} (u19)
 	(u19) edge[line width= 3pt] (u20)
 	(u20) edge node [above] {} (u21)

 	(u25) edge node [above] {} (u26)
 	(u26) edge node [above] {} (u27)
 	(u27) edge[line width= 3pt] (u28)
 	(u28) edge node [above] {} (u29)

 	(u33) edge node [above] {} (u34)
 	(u34) edge [line width= 3pt] (u35)
 	(u35) edge node [above] {} (u36)
 	(u36) edge node [above] {} (u37)

 	(u41) edge node [above] {} (u42)
 	(u42) edge [line width= 3pt] (u43)
 	(u43) edge node [above] {} (u44)
 	(u44) edge node [above] {} (u45)

 	(u49) edge node [above] {} (u50)
 	(u50) edge [line width= 3pt] (u51)
 	(u51) edge node [above] {} (u52)
 	(u52) edge node [above] {} (u53)

 	(u57) edge node [above] {} (u58)
 	(u58) edge [line width= 3pt] (u59)
 	(u59) edge node [above] {} (u60)
 	(u60) edge node [above] {} (u61)

 	(u1) edge node [above] {} (u9)
 	(u9) edge node [above] {} (u17)
 	(u17) edge node [above] {} (u25)
 	(u25) edge node [above] {} (u33)
 	(u33) edge node [above] {} (u41)
 	(u41) edge (u49)
 	(u49) edge (u57)
 	(u2) edge node [above] {} (u10)
 	(u10) edge node [above] {} (u18)
 	(u18) edge node [above] {} (u26)
 	(u26) edge node [above] {} (u34)
 	(u34) edge node [above] {} (u42)
 	(u42) edge (u50)
 	(u50) edge (u58)
 	(u3) edge node [above] {} (u11)
 	(u11) edge node [above] {} (u19)
 	(u19) edge node [above] {} (u27)
 	(u27) edge [line width= 3pt] (u35)
 	(u35) edge node [above] {} (u43)
 	(u43) edge (u51)
 	(u51) edge (u59)
 	(u4) edge node [above] {} (u12)
 	(u12) edge node [above] {} (u20)
 	(u20) edge node [above] {} (u28)
 	(u28) edge node [above] {} (u36)
 	(u36) edge node [above] {} (u44)
 	(u44) edge (u52)
 	(u52) edge (u60)
 	(u5) edge node [above] {} (u13)
 	(u13) edge node [above] {} (u21)
 	(u21) edge node [above] {} (u29)
 	(u29) edge node [above] {} (u37)
 	(u37) edge node [above] {} (u45)
 	(u45) edge (u53)
 	(u53) edge (u61)	
 	;
 	\end{tikzpicture}
 	
	\caption{An almost horizontal slicing}
	\label{fig:gridh02}
\end{figure}

\begin{figure}
	\centering
	\begin{tikzpicture}
 	\vertex (u1) at (0,4) []{};
 	\vertex (u2) at (0,3) []{};
 	\vertex (u3) at (0,2) []{};
 	\vertex (u4) at (0,1) []{};
 	\vertex (u5) at (0,0) []{};
 	
 	\vertex (u9) at (1,4) []{};
 	\vertex (u10) at (1,3) []{};
 	\vertex (u11) at (1,2) []{};
 	\vertex (u12) at (1,1) []{};
 	\vertex (u13) at (1,0) []{};

 	\vertex (u17) at (2,4) []{};
 	\vertex (u18) at (2,3) []{};
 	\vertex (u19) at (2,2) []{};
 	\vertex (u20) at (2,1) []{};
 	\vertex (u21) at (2,0) []{};

 	\vertex (u25) at (3,4) []{};
 	\vertex (u26) at (3,3) []{};
 	\vertex (u27) at (3,2) []{};
 	\vertex (u28) at (3,1) []{};
 	\vertex (u29) at (3,0) []{};

 	\vertex (u33) at (4,4) []{};
 	\vertex (u34) at (4,3) []{};
 	\vertex (u35) at (4,2) []{};
 	\vertex (u36) at (4,1) []{};
 	\vertex (u37) at (4,0) []{};

 	\vertex (u41) at (5,4) []{};
 	\vertex (u42) at (5,3) []{};
 	\vertex (u43) at (5,2) []{};
 	\vertex (u44) at (5,1) []{};
 	\vertex (u45) at (5,0) []{};

 	\vertex (u49) at (6,4) []{};
 	\vertex (u50) at (6,3) []{};
 	\vertex (u51) at (6,2) []{};
 	\vertex (u52) at (6,1) []{};
 	\vertex (u53) at (6,0) []{};

 	\path
 	(u1) edge node [above] {} (u2)
 	(u2) edge node [above] {} (u3)
 	(u3) edge node [above] {} (u4)
 	(u4) edge node [above] {} (u5)

 	(u9) edge node [above] {} (u10)
 	(u10) edge node [above] {} (u11)
 	(u11) edge node [above] {} (u12)
 	(u12) edge node [above] {} (u13)

 	(u17) edge node [above] {} (u18)
 	(u18) edge node [above] {} (u19)
 	(u19) edge node [above] {} (u20)
 	(u20) edge node [above] {} (u21)

 	(u25) edge node [above] {} (u26)
 	(u26) edge [line width= 3pt] (u27)
 	(u27) edge [line width= 3pt] (u28)
 	(u28) edge node [above] {} (u29)

 	(u33) edge node [above] {} (u34)
 	(u34) edge node [above] {} (u35)
 	(u35) edge node [above] {} (u36)
 	(u36) edge node [above] {} (u37)

 	(u41) edge node [above] {} (u42)
 	(u42) edge node [above] {} (u43)
 	(u43) edge node [above] {} (u44)
 	(u44) edge node [above] {} (u45)

 	(u49) edge node [above] {} (u50)
 	(u50) edge node [above] {} (u51)
 	(u51) edge node [above] {} (u52)
 	(u52) edge node [above] {} (u53)

 	(u1) edge node [above] {} (u9)
 	(u9) edge node [above] {} (u17)
 	(u17) edge node [above] {} (u25)
 	(u25) edge [line width= 3pt] (u33)
 	(u33) edge node [above] {} (u41)
 	(u41) edge (u49)
 	
 	(u2) edge node [above] {} (u10)
 	(u10) edge node [above] {} (u18)
 	(u18) edge node [above] {} (u26)
 	(u26) edge [line width= 3pt] (u34)
 	(u34) edge node [above] {} (u42)
 	(u42) edge (u50)
 	
 	(u3) edge node [above] {} (u11)
 	(u11) edge node [above] {} (u19)
 	(u19) edge [line width= 3pt] (u27)
 	(u27) edge [line width= 3pt] (u35)
 	(u35) edge node [above] {} (u43)
 	(u43) edge (u51)
 	
 	(u4) edge node [above] {} (u12)
 	(u12) edge node [above] {} (u20)
 	(u20) edge [line width= 3pt]  (u28)
 	(u28) edge node [above] {} (u36)
 	(u36) edge node [above] {} (u44)
 	(u44) edge (u52)
 	
 	(u5) edge node [above] {} (u13)
 	(u13) edge node [above] {} (u21)
 	(u21) edge [line width= 3pt] (u29)
 	(u29) edge node [above] {} (u37)
 	(u37) edge node [above] {} (u45)
 	(u45) edge (u53)
 	;
 	\end{tikzpicture}
 	
	\caption{An almost vertical slicing}
	\label{fig:gridv03}
\end{figure}

\begin{figure}
	\centering
	\begin{tikzpicture}
 	\vertex (u1) at (0,4) []{};
 	\vertex (u2) at (0,3) []{};
 	\vertex (u3) at (0,2) []{};
 	\vertex (u4) at (0,1) []{};
 	\vertex (u5) at (0,0) []{};
 	
 	\vertex (u9) at (1,4) []{};
 	\vertex (u10) at (1,3) []{};
 	\vertex (u11) at (1,2) []{};
 	\vertex (u12) at (1,1) []{};
 	\vertex (u13) at (1,0) []{};

 	\vertex (u17) at (2,4) []{};
 	\vertex (u18) at (2,3) []{};
 	\vertex (u19) at (2,2) []{};
 	\vertex (u20) at (2,1) []{};
 	\vertex (u21) at (2,0) []{};

 	\vertex (u25) at (3,4) []{};
 	\vertex (u26) at (3,3) []{};
 	\vertex (u27) at (3,2) []{};
 	\vertex (u28) at (3,1) []{};
 	\vertex (u29) at (3,0) []{};

 	\vertex (u33) at (4,4) []{};
 	\vertex (u34) at (4,3) []{};
 	\vertex (u35) at (4,2) []{};
 	\vertex (u36) at (4,1) []{};
 	\vertex (u37) at (4,0) []{};

 	\vertex (u41) at (5,4) []{};
 	\vertex (u42) at (5,3) []{};
 	\vertex (u43) at (5,2) []{};
 	\vertex (u44) at (5,1) []{};
 	\vertex (u45) at (5,0) []{};

 	\vertex (u49) at (6,4) []{};
 	\vertex (u50) at (6,3) []{};
 	\vertex (u51) at (6,2) []{};
 	\vertex (u52) at (6,1) []{};
 	\vertex (u53) at (6,0) []{};

 	\path
 	(u1) edge node [above] {} (u2)
 	(u2) edge node [above] {} (u3)
 	(u3) edge [line width= 3pt]  (u4)
 	(u4) edge node [above] {} (u5)

 	(u9) edge node [above] {} (u10)
 	(u10) edge node [above] {} (u11)
 	(u11) edge [line width= 3pt]  (u12)
 	(u12) edge node [above] {} (u13)

 	(u17) edge node [above] {} (u18)
 	(u18) edge node [above] {} (u19)
 	(u19) edge [line width= 3pt] (u20)
 	(u20) edge node [above] {} (u21)

 	(u25) edge node [above] {} (u26)
 	(u26) edge [line width= 3pt] (u27)
 	(u27) edge [line width= 3pt] (u28)
 	(u28) edge node [above] {} (u29)

 	(u33) edge node [above] {} (u34)
 	(u34) edge [line width= 3pt] (u35)
 	(u35) edge node [above] {} (u36)
 	(u36) edge node [above] {} (u37)

 	(u41) edge node [above] {} (u42)
 	(u42) edge [line width= 3pt] (u43)
 	(u43) edge node [above] {} (u44)
 	(u44) edge node [above] {} (u45)

 	(u49) edge node [above] {} (u50)
 	(u50) edge [line width= 3pt] (u51)
 	(u51) edge node [above] {} (u52)
 	(u52) edge node [above] {} (u53)

 	(u1) edge node [above] {} (u9)
 	(u9) edge node [above] {} (u17)
 	(u17) edge node [above] {} (u25)
 	(u25) edge node [above] {} (u33)
 	(u33) edge node [above] {} (u41)
 	(u41) edge (u49)
 	
 	(u2) edge node [above] {} (u10)
 	(u10) edge node [above] {} (u18)
 	(u18) edge node [above] {} (u26)
 	(u26) edge node [above] {} (u34)
 	(u34) edge node [above] {} (u42)
 	(u42) edge (u50)
 	
 	(u3) edge node [above] {} (u11)
 	(u11) edge node [above] {} (u19)
 	(u19) edge [line width= 3pt] (u27)
 	(u27) edge [line width= 3pt] (u35)
 	(u35) edge node [above] {} (u43)
 	(u43) edge (u51)
 	
 	(u4) edge node [above] {} (u12)
 	(u12) edge node [above] {} (u20)
 	(u20) edge node [above] {}  (u28)
 	(u28) edge node [above] {} (u36)
 	(u36) edge node [above] {} (u44)
 	(u44) edge (u52)
 	
 	(u5) edge node [above] {} (u13)
 	(u13) edge node [above] {} (u21)
 	(u21) edge node [above] {} (u29)
 	(u29) edge node [above] {} (u37)
 	(u37) edge node [above] {} (u45)
 	(u45) edge (u53)
 	;
 	\end{tikzpicture}
 	
	\caption{An almost horizontal slicing}
	\label{fig:gridh03}
\end{figure}
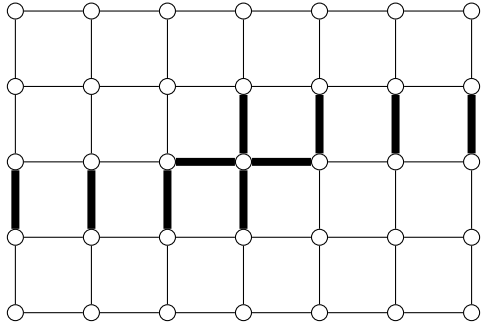

	\subsection*{Summary of the Analysis}
 	Thus from the five scenarios we analyzed, we come to realize the following.
 	
 	\begin{enumerate}
 		\item[\textbf{R1}.]  When $n$ is even, the vertical slicing of the grid graph would provide us with two non-adjacent subgraphs, $A_p$ and $B_p$, of equal size leaving $m$ edges.
 		\item[\textbf{R2}.] When $m$ is even, a similar horizontal slicing of the grid graph would provide us with two non-adjacent subgraphs, $A_p$ and $B_p$, of equal size leaving $n$ edges.
 		\item[\textbf{R3}.] When $n$ is odd $m$ is even, an almost vertical slicing of the grid graph would provide us with two non-adjacent subgraphs, $A_p$ and $B_p$, of equal size leaving $m+1$ edges.
 		\item[\textbf{R4}.] When $m$ is odd and $n$ is even a similar horizontal slicing of the grid graph would provide us with two non-adjacent subgraphs, $A_p$ and $B_p$, of equal size leaving $n+1$ edges.
 		\item[\textbf{R5}.] When $n$ and $m$ are odd, an almost vertical slicing of the grid graph would provide us with two non-adjacent subgraphs, $A_p$ and $B_p$, of equal size leaving $m+3$ edges.
 		\item[\textbf{R6}.] When $n$ and $m$ are odd, an almost horizontal slicing of the grid graph would provide us with two non-adjacent subgraphs, $A_p$ and $B_p$, of equal size leaving $n+3$ edges.
 	\end{enumerate}

 	We now consider non-trivial grids $P_n \times P_m$ with $n > 1$ and $m > 1$.

 	The grid graph $P_n \times P_m$ has $nm$ vertices and $m(n-1)+(m-1)n$=$2mn-(m+n)$ edges. 
 	
 	We find the line completion number by considering the various values of $n$ and $m$.
 	
 	\section*{Case 3: when $n$ and $m$ are even}
 	
 	\subsection*{Subcase 3.1: $m \le n$}
	
 From the Observation \textbf{R1}, we see that a vertical slicing would give use two optimal subgraphs $A_p$ and $B_p$ of size $\frac{2mn-(m+n)-m}{2}$ that are not adjacent to each other and one $mK_2$. If we add any edge from $mK_2$ to any of the subgraphs $A_p$ or $B_p$, they become adjacent. Hence, the size of $A_p$= the size $B_p$ =$mn-m-\frac{n}{2}$.
 
 If we take the horizontal slicing as given in Observation \textbf{R2} leave us with two subgraphs $A_p$ and $B_p$, each of size $mn-n-\frac{m}{2}$.
 	
 Now, 
 	\begin{align*}
 	mn-n-\frac{m}{2}-\big(mn-m-\frac{n}{2}\big)&= \frac{m-n}{2}\\
 	&\begin{cases}
 	=0 &\text{ if }n=m\\
 	<0 &\text{ if }n>m
 	\end{cases}
 	\end{align*}
 	
 	Hence, the vertical slicing gives us more edges. i. e., $mn-m-\frac{n}{2}$ edges. 
 	
 	Thus the line completion number of $P_n \times P_m$ in this case is $mn-m-\frac{n}{2}+1$= $mn+1-\frac{m+n}{2}-\frac{m}{2}$.
 	
 	\subsection*{Subcase 3.2: $m \ge n$}
  		
 	We use the same line of argument as in the previous case and see that the line completion number is obtained from a horizontal slicing. Thus the line completion number of $P_n \times P_m$ is $mn+1-\frac{m+n}{2}-\frac{n}{2}$.
 	
 	Since $P_m \times P_n$ is isomorphic to $P_n \times P_m$, we can have a general formula for the line completion number of $P_n \times P_m$. i. e., 
 	
 	\[
 	lc(P_n \times P_m)= mn+1-\frac{m+n}{2}-\frac{\min{(m, n)}}{2}.
 	\]

 	\section*{Case 4: when $n \ne 1$ and $m \ne 1$ are odd}

\subsection*{Subcase 4.1: $m < n$}

	From the Observation \textbf{R5}, an almost vertical slicing would give use two optimal subgraphs $A_p$ and $B_p$ of equal size leaving exactly $m+3$ edges. Hence, the size of subgraph $A_p$ is 
	\[
	\frac{2mn-(m+n)-(m+3)}{2} = mn-m-\frac{(n+3)}{2}.
	\]
	
	Similarly, an almost horizontal slicing as observed in \textbf{R6}, would give us two optimal subgraphs $A_p$ and $B_p$ of equal size leaving exactly $n+3$ edges.
	Hence, the size of subgraph $A_p$ is 
	\[
	\frac{2mn-(m+n)-(n+3)}{2} = mn-n-\frac{(m+3)}{2}.
	\]
	Now,
	\begin{align*}
	mn-m-\frac{(n+3)}{2}-\big(mn-n-\frac{(m+3)}{2}\big)&= \frac{n-m}{2}\\
	& >0 \text{ if }n>m.
	\end{align*}
	
	Hence, the vertical slicing gives us more edges. Therefore, the line completion number of the $P_n \times P_m$ in this case is
	\[
	mn-m-\frac{(n+3)}{2}+1=mn-\frac{(m+n)}{2}-\frac{(m+1)}{2}.
	\]
	\subsection*{Subcase 4.2: $m > n$}
	
	Since $P_m \times P_n$ is isomorphic to $P_n \times P_m$, we can conclude that,
		
	\[
	lc(P_n \times P_m)=mn-\frac{(m+n)}{2}-\frac{(n+1)}{2}.
	\]
	
	A combined formula for this case is thus obtained as, 
	
	\[
	lc(P_n \times P_m)= mn-\frac{(m+n)}{2}-\frac{\min{(m, n)}+1}{2}.
	\]
	 	\section*{Case 5: when $n \ne 1$ and $m \ne 1$ are of opposite parity}
	
	\subsection*{Subcase 5.1: $m < n$}
	
	\subsubsection*{Subsubcase 5.1.1: $m$ is odd and $n$ is even}		
	
	Using Observation \textbf{R1}, we do a vertical slicing of the grid graph and obtain two optimal subgraphs each of size
	
	\[
	mn-m-\frac{n}{2}.
	\]
	
	Using Observation \textbf{R4}, we do an almost horizontal slicing of the grid graph and obtain two subgraphs each of size 
	
	\[
	mn-n-\frac{(m+1)}{2}.
	\]
	
	However, it is given that $m < n$. Therefore we have,
	
	\[
	mn-m-\frac{n}{2}- \big(mn-n-\frac{(m+1)}{2}\big)= \frac{(n-m)}{2}+\frac{1}{2}>0.
	\]
	
	Thus the vertical slicing provides the larger size. Hence, the line completion number is, 
	
	\[
	\big(mn-m-\frac{n}{2}\big)+1=mn+1-m-\frac{n}{2}.
	\]
	\subsubsection*{Subsubcase 5.1.2: $m$ is even and $n$ is odd}	
	
	Using Observation \textbf{R3}, we do an almost vertical slicing of the grid graph and obtain two optimal subgraphs each of size
	
	\[
	mn-m-\frac{(n+1)}{2}.
	\]
	
	Using Observation \textbf{R2}, we do a horizontal slicing of the grid graph and obtain two subgraphs each of size 
	
	\[
	mn-n-\frac{m}{2}.
	\]
	
	However, it is given that $m < n$. Therefore we have,
	
	\[
	mn-m-\frac{(n+1)}{2}- \big(mn-n-\frac{m}{2}\big)= \frac{(n-m)}{2}-\frac{1}{2}\ge0 \text{ as (n-m) is at least 1}.
	\]
	
	Thus the vertical slicing provides the larger size. Hence, the line completion number is, 
	
	\[
	\big(mn-m-\frac{(n+1)}{2}\big)+1=mn+1-m-\frac{(n+1)}{2}.
	\]

	A combined formula for Subcase 5.1 is thus
	\[
	lc(P_n \times P_m)=mn+1-m-\lceil\frac{n}{2}\rceil.
	\]
	\subsection*{Subcase 5.2: $m > n$}
	
	\subsubsection*{Subsubcase 5.2.1: $m$ is odd and $n$ is even}		
	
	Using Observation \textbf{R1}, we do a vertical slicing of the grid graph and obtain two optimal subgraphs each of size
	
	\[
	mn-m-\frac{n}{2}.
	\]
	
	Using Observation \textbf{R4}, we do an almost horizontal slicing of the grid graph and obtain two subgraphs each of size 
	\[
	mn-n-\frac{(m+1)}{2}.
	\]
		However, it is given that $m > n$. Therefore we have,
	\[
	mn-m-\frac{n}{2}- \big(mn-n-\frac{(m+1)}{2}\big)= \frac{(n-m)}{2}+\frac{1}{2}\le0 \text{ as (n-m) is at most -1}.
	\]
	Thus the vertical slicing provides the larger size. Hence, the line completion number is, 
	\[
	\big(mn-n-\frac{(m+1)}{2}\big)+1=mn+1-n-\frac{(m+1)}{2}.
	\]
		\subsubsection*{Subsubcase 5.2.2: $m$ is even and $n$ is odd}

	Using Observation \textbf{R3}, we do an almost vertical slicing of the grid graph and obtain two optimal subgraphs, as in Case 3, each of size
	
	\[
	mn-m-\frac{(n+1)}{2}.
	\]
	
	Using Observation \textbf{R2}, we do a horizontal slicing of the grid graph and obtain two subgraphs, as in Case 2, each of size 
	
	\[
	mn-n-\frac{(m)}{2}.
	\]
	
	However, it is given that $m > n$. Therefore we have,
	
	\[
	mn-m-\frac{(n+1)}{2}- \big(mn-n-\frac{m}{2}\big)= \frac{(n-m)}{2}-\frac{1}{2}<0 \text{ as (n-m) is at most -1}.
	\]
	
	Thus the horizontal slicing provides the larger size. Hence, the line completion number is, 
	
	\[
	\big(mn-n-\frac{m}{2}\big)+1=mn+1-n-\frac{m}{2}.
	\]
	
	A combined formula for Subcase 5.2 is thus
	\[
	lc(P_n \times P_m)=mn+1-n-\lceil\frac{m}{2}\rceil.
	\]
	
	Subcases 5.1 and 5.2 can be combined to get the formula for the line completion number of the grid graph when $n$ and $m$ of opposite parity as,
	
	\[
	lc(P_n \times P_m)=mn+1-\min{(m, n)}-\lceil\frac{\max{(m, n)}}{2}\rceil.
	\]
	
\end{proof}

\section{Illustrations}
We see some illustrations now.
\begin{example}\label{exeveeven}
	$m$ and $n$ are even
\end{example}
Consider the graph $P_4 \times P_6$.
\begin{figure}[h!]
	\centering
	\begin{tikzpicture}
	\vertex (u1) at (0,0) []{};
	\vertex (u2) at (1,0) []{};
	\vertex (u3) at (2,0) []{};
	\vertex (u4) at (3,0) []{};
	\vertex (u5) at (4,0) []{};
	\vertex (u6) at (5,0) []{};
	\vertex (u7) at (0,1) []{};
	\vertex (u8) at (1,1) []{};
	\vertex (u9) at (2,1) []{};
	\vertex (u10) at (3,1) []{};
	\vertex (u11) at (4,1) []{};
	\vertex (u12) at (5,1) []{};
	\vertex (u13) at (0,2) []{};
	\vertex (u14) at (1,2) []{};
	\vertex (u15) at (2,2) []{};
	\vertex (u16) at (3,2) []{};
	\vertex (u17) at (4,2) []{};
	\vertex (u18) at (5,2) []{};
	\vertex (u19) at (0,3) []{};
	\vertex (u20) at (1,3) []{};
	\vertex (u21) at (2,3) []{};
	\vertex (u22) at (3,3) []{};
	\vertex (u23) at (4,3) []{};
	\vertex (u24) at (5,3) []{};
	\path
	(u1) edge[line width= 2pt] (u2)
	(u2) edge[line width= 2pt] (u3)
	(u3) edge node [above] {$e_{43}$} (u4)
	(u4) edge[line width= 2pt] (u5)
	(u5) edge[line width= 2pt] (u6)
	(u7) edge[line width= 2pt] (u8)
	(u8) edge[line width= 2pt] (u9)
	(u9) edge node [above] {$e_{33}$} (u10)
	(u10) edge[line width= 2pt] (u11)
	(u11) edge[line width= 2pt] (u12)
	(u13) edge[line width= 2pt] (u14)
	(u14) edge[line width= 2pt] (u15)
	(u15) edge node [above] {$e_{23}$}(u16)
	(u16) edge[line width= 2pt] (u17)
	(u17) edge[line width= 2pt] (u18)
	(u19) edge[line width= 2pt] (u20)
	(u20) edge[line width= 2pt] (u21)
	(u21) edge node [above] {$e_{13}$}(u22)
	(u22) edge[line width= 2pt] (u23)
	(u23) edge[line width= 2pt] (u24)
	(u1) edge[line width= 2pt] (u7)
	(u7) edge[line width= 2pt] (u13)
	(u13) edge[line width= 2pt] (u19)
	(u2) edge[line width= 2pt] (u8)
	(u8) edge[line width= 2pt] (u14)
	(u14) edge[line width= 2pt] (u20)
	(u3) edge[line width= 2pt] (u9)
	(u9) edge[line width= 2pt] (u15)
	(u15) edge[line width= 2pt] (u21)
	(u4) edge[line width= 2pt] (u10)
	(u10) edge[line width= 2pt] (u16)
	(u16) edge[line width= 2pt] (u22)
	(u5) edge[line width= 2pt] (u11)
	(u11) edge[line width= 2pt] (u17)
	(u17) edge[line width= 2pt] (u23)
	(u6) edge[line width= 2pt] (u12)
	(u12) edge[line width= 2pt] (u18)
	(u18) edge[line width= 2pt] (u24)
	;
	\end{tikzpicture}
	\caption{\label{fig:4}$P_4 \times P_6$}
\end{figure}
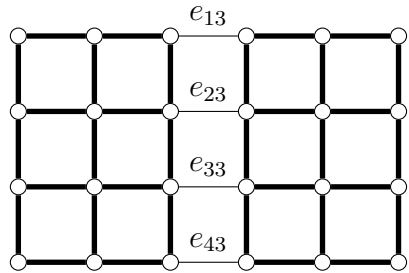
$P_4 \times P_6$ will have 24 vertices. We partition the vertex set into two equal subsets. Let A and B be the sets. We put vertices of the first 3 $P_4$ paths in set $A$. Now the subgraph induced by set $A$ will be $P_4 \times P_3$. Now we take the  vertices of 4th, 5th and 6th paths in set $B$. Set $B$ also induce a subgraph $P_4 \times P_3$. Let $C$ and $D$ be the edge set of $P_4 \times P_3$. From Figure \ref{fig:4}, we can see that $C$ and $D$ are not adjacent in $L_{17}(P_4 \times P_6)$. Now, if we add any of the edges $e_{13},e_{23},e_{33},e_{43}$ to set $C$ and $D$
they become adjacent. Therefore the $lc(P_4 \times P_6)$ is 18.

\begin{example}
	$P_4 \times P_5$
\end{example}

Let us consider $P_4 \times P_5$. We partition the vertex set as in Figure  \ref{fig:6}.

\begin{figure}[h!]
	\centering
	\begin{tikzpicture}
	
	\vertex (u1) at (0,0) []{};
	\vertex (u2) at (1,0) []{};
	\vertex (u3) at (2,0) []{};
	\vertex (u4) at (3,0) []{};
	\vertex (u5) at (4,0) []{};
	
	\vertex (u7) at (0,1) []{};
	\vertex (u8) at (1,1) []{};
	\vertex (u9) at (2,1) []{};
	\vertex (u10) at (3,1) []{};
	\vertex (u11) at (4,1) []{};
	
	\vertex (u13) at (0,2) []{};
	\vertex (u14) at (1,2) []{};
	\vertex (u15) at (2,2) []{};
	\vertex (u16) at (3,2) []{};
	\vertex (u17) at (4,2) []{};
	
	\vertex (u19) at (0,3) []{};
	\vertex (u20) at (1,3) []{};
	\vertex (u21) at (2,3) []{};
	\vertex (u22) at (3,3) []{};
	\vertex (u23) at (4,3) []{};
	
	\path
	(u1) edge[line width= 2pt] (u2)
	(u2) edge node [above] {$e_{42}$} (u3)
	(u3) edge[line width= 2pt] (u4)
	(u4) edge[line width= 2pt] (u5)
	
	(u7) edge[line width= 2pt] (u8)
	(u8) edge node [above] {$e_{32}$} (u9)
	(u9) edge[line width= 2pt] (u10)
	(u10) edge[line width= 2pt] (u11)
	
	(u13) edge[line width= 2pt] (u14)
	(u14) edge[line width= 2pt] (u15)
	(u15) edge node [above] {$e_{23}$} (u16)
	(u16) edge[line width= 2pt] (u17)
	
	(u19) edge[line width= 2pt] (u20)
	(u20) edge[line width= 2pt] (u21)
	(u21) edge node [above] {$e_{13}$}(u22)
	(u22) edge[line width= 2pt] (u23)
	
	(u1) edge[line width= 2pt] (u7)
	(u7) edge[line width= 2pt] (u13)
	(u13) edge[line width= 2pt] (u19)
	
	(u2) edge[line width= 2pt] (u8)
	(u8) edge[line width= 2pt] (u14)
	(u14) edge[line width= 2pt] (u20)
	
	(u3) edge[line width= 2pt] (u9)
	(u9) edge node [right] {$f_{23}$} (u15)
	(u15) edge[line width= 2pt] (u21)
	
	(u4) edge[line width= 2pt] (u10)
	(u10) edge[line width= 2pt] (u16)
	(u16) edge[line width= 2pt] (u22)
	
	(u5) edge[line width= 2pt] (u11)
	(u11) edge[line width= 2pt]  (u17)
	(u17) edge[line width= 2pt] (u23)
	
	;
	\end{tikzpicture}
	\caption{\label{fig:6}$P_4 \times P_5$}
\end{figure}
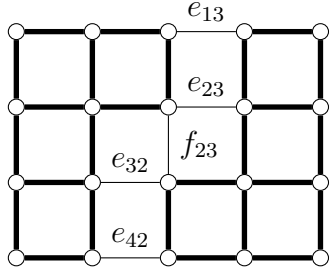
Since both of them are same we prefer the partition in figure 4.3. Now the subgraph induced by these two partitions is $P_2 \times P_5$. Edge set of both copies of $P_2 \times P_5$ will contain 13 edges. Therefore line completion number of $P_4 \times P_5$ = 13+1 = 14.

\begin{example}
	$P_5 \times P_7$
\end{example}
Consider the example $P_5 \times P_7$.
\begin{figure}[h!]
	\centering
	\begin{tikzpicture}
	
	\vertex (u1) at (0,0) []{};
	\vertex (u2) at (1,0) []{};
	\vertex (u3) at (2,0) []{};
	\vertex (u4) at (3,0) []{};
	\vertex (u5) at (4,0) []{};
	\vertex (u6) at (5,0) []{};
	\vertex (u7) at (6,0) []{};
	
	\vertex (u8) at (0,1) []{};
	\vertex (u9) at (1,1) []{};
	\vertex (u10) at (2,1) []{};
	\vertex (u11) at (3,1) []{};
	\vertex (u12) at (4,1) []{};
	\vertex (u13) at (5,1) []{};
	\vertex (u14) at (6,1) []{};
	
	\vertex (u15) at (0,2) []{};
	\vertex (u16) at (1,2) []{};
	\vertex (u17) at (2,2) []{};
	\vertex (u18) at (3,2) []{};
	\vertex (u19) at (4,2) []{};
	\vertex (u20) at (5,2) []{};
	\vertex (u21) at (6,2) []{};
	
	\vertex (u22) at (0,3) []{};
	\vertex (u23) at (1,3) []{};
	\vertex (u24) at (2,3) []{};
	\vertex (u25) at (3,3) []{};
	\vertex (u26) at (4,3) []{};
	\vertex (u27) at (5,3) []{};
	\vertex (u28) at (6,3) []{};
	
	\vertex (u29) at (0,4) []{};
	\vertex (u30) at (1,4) []{};
	\vertex (u31) at (2,4) []{};
	\vertex (u32) at (3,4) []{};
	\vertex (u33) at (4,4) []{};
	\vertex (u34) at (5,4) []{};
	\vertex (u35) at (6,4) []{};
	
	\path
	(u1) edge[line width= 2pt] (u2)
	(u2) edge[line width= 2pt] (u3)
	(u3) edge node [below] {$e_{53}$} (u4)
	(u4) edge[line width= 2pt] (u5)
	(u5) edge[line width= 2pt] (u6)
	(u6) edge[line width= 2pt] (u7)
	
	(u8) edge[line width= 2pt] (u9)
	(u9) edge[line width= 2pt] (u10)
	(u10) edge node [below] {$e_{43}$} (u11)
	(u11) edge[line width= 2pt] (u12)
	(u12) edge[line width= 2pt] (u13)
	(u13) edge[line width= 2pt] (u14)
	
	(u15) edge[line width= 2pt] (u16)
	(u16) edge[line width= 2pt] (u17)
	(u17) edge node [below] {$e_{33}$} (u18)
	(u18) edge node [above] {$e_{34}$} (u19)
	(u19) edge[line width= 2pt] (u20)
	(u20) edge[line width= 2pt] (u21)
	
	(u22) edge[line width= 2pt] (u23)
	(u23) edge[line width= 2pt] (u24)
	(u24) edge[line width= 2pt] (u25)
	(u25) edge node [above] {$e_{24}$}(u26)
	(u26) edge[line width= 2pt] (u27)
	(u27) edge[line width= 2pt] (u28)
	
	(u29) edge[line width= 2pt] (u30)
	(u30) edge[line width= 2pt] (u31)
	(u31) edge[line width= 2pt] (u32)
	(u32) edge node [above] {$e_{14}$} (u33)
	(u33) edge[line width= 2pt] (u34)
	(u34) edge[line width= 2pt] (u35)
	
	(u1) edge[line width= 2pt] (u8)
	(u8) edge[line width= 2pt] (u15)
	(u15) edge[line width= 2pt] (u22)
	(u22) edge[line width= 2pt] (u29)
	
	(u2) edge[line width= 2pt] (u9)
	(u9) edge[line width= 2pt] (u16)
	(u16) edge[line width= 2pt] (u23)
	(u23) edge[line width= 2pt] (u30)
	
	(u3) edge[line width= 2pt] (u10)
	(u10) edge[line width= 2pt] (u17)
	(u17) edge[line width= 2pt] (u24)
	(u24) edge[line width= 2pt] (u31)
	
	(u4) edge[line width= 2pt] (u11)
	(u11) edge node [right] {$f_{34}$}(u18)
	(u18) edge node [left] {$f_{24}$}(u25)
	(u25) edge[line width= 2pt] (u32)
	
	(u5) edge[line width= 2pt] (u12)
	(u12) edge[line width= 2pt] (u19)
	(u19) edge[line width= 2pt] (u26)
	(u26) edge[line width= 2pt] (u33)
	
	(u6) edge[line width= 2pt] (u13)
	(u13) edge[line width= 2pt] (u20)
	(u20) edge[line width= 2pt] (u27)
	(u27) edge[line width= 2pt] (u34)
	
	(u7) edge[line width= 2pt] (u14)
	(u14) edge[line width= 2pt] (u21)
	(u21) edge[line width= 2pt] (u28)
	(u28) edge[line width= 2pt] (u35)
	;
	\end{tikzpicture}
	\caption{\label{fig:7}{$P_5 \times P_7$}}
\end{figure}
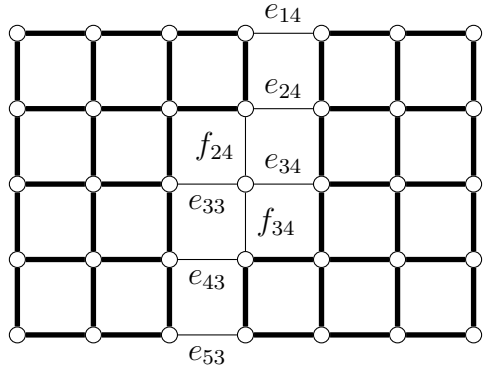
Number of vertices in $P_5 \times P_7$ is 35. We cannot partition the vertex set equally. Let A and B be the partitions. Let $|A| = 18$ and $|B| = 17$. Now if we remove the 3rd vertex of 4th $P_5$ path from set $A$, we get two equal partitions. Therefore, the number of edges in the subgraphs induced by both $A$ and $B = 25$. From Figure \ref{fig:7}, it is clear that $L_{25}(P_5 \times P_7)$ is not complete.  Now if we add one more edge to those corresponding edge sets they become adjacent. Therefore the line completion number is 25+1 = 26.
\section*{Acknowledgement}

We express our sincere gratitude to the reviews and comments given by the scholars when we uploaded the manuscript on arxiv.org. Our special thanks to Michel Marcus. We thank the anonymous referees for the comments and corrections provided. 
      
    \end{document}